\newtcolorbox{mybox}{colback=blue!5!white,
colframe=blue!75!black}
\theoremstyle{plain} 
\theoremstyle{definition}
\theoremstyle{remark}
  \pgfplotsset{                   
        table/search path={pics},
    }
\begin{document}

\title{Presolve techniques for quasi-convex \\chance constraints with finite-support low-dimensional uncertainty}
%
%
\author{Guillaume Van Dessel\inst{1}\orcidID{0000-0002-7302-8579} \and \\
Fran\c{c}ois Glineur\inst{1,2}\orcidID{0000-0002-5890-1093}}
\authorrunning{G. Van Dessel \& F. Glineur}
%
\institute{UCLouvain, ICTEAM (INMA), 4 Avenue Georges Lema\^{\i}tre, Louvain-la-Neuve, BE \and
UCLouvain, CORE, 34
Voie du Roman Pays, Louvain-la-Neuve, BE}
\maketitle  

\begin{abstract}
Chance-constrained programs (CCP) represent a trade-off between conservatism and robustness in optimization. In many CCPs, one optimizes an objective under a probabilistic constraint continuously parameterized by a random vector $\xi$. In this work, we study the specific case where the constraint is quasi-convex with $\xi$. Moreover, the support of vector $\xi$ is a collection of $N$ scenarios in dimension $p=2$ or $p=3$.\\ In general, even when both the constraint and the objective are convex in the decision variable, the feasible region of a CCP is nonconvex, turning it into a difficult problem. However, under mild assumptions, many CCPs can be recast as big-$M$ mixed-integer convex programs (MICP). \\Unfortunately, the difficulty of these MICPs explodes with the number of scenarios, restricting the instances practically solvable in decent time. \\
To cut down the effective number of scenarios considered in MICP reformulations and accelerate their solving, we propose and test presolve techniques based on computational geometry. Our techniques produce certificates to discard or select \emph{a priori} some scenarios before solving a regular MICP.  Moreover, the information aggregated during presolve leverages the possibility to strengthen big-$M$ constants. Our numerical experiments suggest that spending some time in presolve is more efficient than a direct solve for a class of \emph{probabilistic projection} problems, including an interesting type of \emph{facility location} problem.
\end{abstract}

\section{Introduction}
Let $\xi$ be a $p$-dimensional random vector with finite support of cardinality $N$, i.e. $\textbf{supp}(\xi) = \{\xi^{(1)},\dots,\xi^{(N)}\}$.
We are concerned with optimization problems of the form
\begin{equation}
  F^*_\tau := \min_{x\,\in\,\mathcal{X}}\,F(x) \quad \text{s.t.}\quad \mathbb{P}_\xi[c(x,\xi)\leq 0]\geq 1-\tau \label{eq:min_ccp_gen} \tag{CCP}
\end{equation}
where $F$ represents the objective function to be minimized, $\xi$ parametrizes the probabilistic constraint, $\mathcal{X}\subseteq \mathbb{R}^d$ encodes deterministic constraints that must be satisfied (i.e. independently from $\xi$) and $\tau \in (0,1)$ is a \emph{relaxation tolerance} with respect to the (fully deterministic) robust counterpart of \eqref{eq:min_ccp_gen}, i.e.
\begin{equation}
F^*_0 := \min_{x\,\in\,\mathcal{X}}\,F(x) \quad \text{s.t.}\quad c(x,\xi^{(s)})\leq 0 \quad \forall s \in\{1,\dots,N\}.\label{eq:min_ro_gen} \tag{RO}
\end{equation}
Given the finite support of $\xi$, one usually rewrites \eqref{eq:min_ccp_gen} as follows \vspace*{-.4cm}
\begin{equation}
  F^*_\tau = \min_{x\,\in\,\mathcal{X}}\,F(x) \quad \text{s.t.}\quad \sum_{s=1}^N\,\pi^{(s)} \cdot \mathbbm{1}\big(c(x,\xi^{(s)})\leq 0\big)\geq 1-\tau. \label{eq:min_ccp_gen_2} 
\end{equation}\vspace*{-.2cm}
with $\pi^{(s)}:=\mathbb{P}[\xi = \xi^{(s)}]$ for every $s \in [N]:=\{1,\dots,N\}$.\\

\noindent Problems like \eqref{eq:min_ccp_gen} arise in various fields, e.g. in finance \cite{Swain23}, supply-chain management \cite{Chen01}, optimal vaccination strategies \cite{Tanner08} or even microgrid scheduling \cite{Liu17}. While robustness is arguably a desirable feature of solutions of optimization problems, formulations like \eqref{eq:min_ro_gen} tend to be overly conservative. Indeed, their optimal solutions are affected by every outcome of the random parameters $\xi$, even by the rarest scenarios that, by definition, do not occur often in practice. \\
Note that it can happen that \eqref{eq:min_ro_gen} is not even feasible, especially in the case of infinite supports. Setting the \emph{relaxation tolerance} $\tau>0$ (sometimes referred to as \emph{risk tolerance} in the literature, see e.g. \cite{Ahmed17})  big enough can turn problem $F^*_\tau$ into a feasible one yet keeping a fair amount of robustness by considering most of constraints induced by the empirical scenarios. Solutions of \eqref{eq:min_ccp_gen} can dramatically improve the cost, i.e. $F^*_\tau \ll F^*_0$, but at the price of solving a nonconvex problem in the general case (see Section \ref{sec:feasible_set_ccp}). 
\subsection*{Big-M approach}
\noindent In the present context, one usually invokes the possibility to compute, for any realization of $\xi$, a so-called big-$M$ bound. For every $s \in [N]$, one has access to
\begin{equation}
M^{(s)} \geq \max_{x\,\in\,\mathcal{X}}\,c(x,\xi^{(s)}). \label{eq:raw_bigMs}
\end{equation}
Based upon \eqref{eq:min_ccp_gen_2} and the above big-$M$ constants \eqref{eq:raw_bigMs}, one can introduce $N$ \emph{switching} binary variables $z=(z^{(1)},\dots,z^{(N)})$ to reformulate \eqref{eq:min_ccp_gen} as a mixed-integer program \eqref{eq:new_min_problem} (see e.g. \cite{Ahmed17,Kucukyavuz22, Roland23} and references therein) \vspace*{-.2cm}
\begin{align}
F^*_\tau = \min_{x\, \in\, \mathcal{X},\,z\,\in\,\{0,1\}^N} \quad & F(x) \label{eq:new_min_problem} \tag{MI-CCP}
 \\
\textrm{s.t.} \quad & \sum_{s=1}^N\,\pi^{(s)}\,z_s \geq 1-\tau \nonumber \\
 & c(x,\xi^{(s)})\leq M^{(s)} \cdot (1-z_s) \quad \forall s \in [N].\nonumber
\end{align}
As such, \eqref{eq:new_min_problem} may be computationally expensive to solve.  Notably, it is well-known that the large number of possibly quite conservative big-$M$ constants leads to poor continuous relaxations \cite{Camm90}.
Yet, for many applications of interest, additional structure (e.g. \cite{deOliveira25} wherein $c(x,\xi)$ is linear in $\xi$) is available. This leverages heuristics that allow to efficiently compute lower-bounds $\check{F}^*_\tau\leq F^*_\tau$ and upper-bounds $\hat{F}^*_\tau\geq F^*_\tau$ as well as valid inequalities, substantially cutting down the effective search-space of \eqref{eq:new_min_problem} during a presolve stage. We investigate this approach under the hypothesis that $c(x,\xi)$ is quasi-convex in $\xi$. 
\subsection*{Goals} As already claimed, \eqref{eq:new_min_problem} tractability crucially depends on how large are the big-$M$ values and how many scenarios (hence binary variables) are considered in the model. Unfortunately, even when the inequalities \eqref{eq:raw_bigMs} are tight, the structure of $c$ as well as the diameter of $\mathcal{X}$ can both lead to arbitrarily large bounds. Thereby, in order to speed-up the solving of \eqref{eq:min_ccp_gen} (which is our main goal), one can proceed as follows. \\ \vspace{-5pt}

\smallskip \noindent -- \textbf{Logic encoding}. First, based on topological arguments in the $\xi$-space, we devise logical constraints involving variables $z$ that induce a partition of the scenarios into a \emph{safe set} $\oplus \subseteq [N]$, a \emph{pruned set} $\ominus \subseteq [N]$ and a \emph{selectable set} $[N]\backslash (\oplus \cup \ominus)$. Let $F^*_\tau$ be finite and let $(x^*,z^*)$ be an optimal solution of \eqref{eq:new_min_problem}. If $s \in \oplus$ then either $z^*_s=1$ or $z^*_s$ can be set to $1$ without changing the optimal value $F^*_\tau$. If $s \in \ominus$, $z_s^*$ must be equal to $0$, i.e. the scenario $\xi^{(s)}$ can be ignored. Finally, inspired by \cite{Ruszczynski02},  we provide valid inequalities that couple the values $z^*_s$ for $s \in [N]\backslash (\oplus \cup \ominus)$ so that at least one of them must be $0$. 
\\

\smallskip \noindent --   \textbf{big-$M$ tightening}.  
    Second, akin to \cite{Roland23}, we tighten the big-$M$ values based on $\oplus$ as well as, possibly, an upper-bound $\hat{F}^*_\tau \geq F^*_\tau$ (e.g. $\hat{F}^*_\tau = F^*_0$ if \eqref{eq:min_ro_gen} is \emph{feasible}).  For every $s \in [N]\backslash (\oplus \cup \ominus)$, we compute 
    \begin{equation}
M^{(s)}(\oplus,\hat{F}^*_\tau) \geq \max_{x\,\in\,\mathcal{X},\,F(x)\leq \hat{F}^*_\tau}\,c(x,\xi^{(s)})\quad \text{s.t.}\quad c(x,\xi^{(\tilde{s})})\leq 0\quad\forall \tilde{s} \in \oplus. \label{eq:refined_bigMs_safe}
\end{equation}
Note that if the problem at the right-hand side of \eqref{eq:refined_bigMs_safe} admits an optimal value less or equal than $0$, the scenario $s$ can be added to the \emph{safe set} $\oplus$. \\Regarding \eqref{eq:raw_bigMs}, $M^{(s)}$ is a shorthand for $M^{(s)}(\emptyset, +\infty)$ as defined in \eqref{eq:refined_bigMs_safe}.
\vspace{-3pt}

\subsection*{Notations} When an optimization problem is \emph{infeasible}, i.e. its \emph{feasible set} is empty, we assign it the optimal value $+\infty$. Conversely, when it is \emph{unbounded}, e.g. there exists a \emph{recession direction} along which the objective is not lower-bounded, we set the optimal value to $-\infty$. Let $\bar{\xi} \in \mathbb{R}^p$ denote a realization of $\xi$, we introduce \begin{equation}\mathcal{R}(\bar{\xi}) := \{x \in \mathbb{R}^d\,|\,c(x,\bar{\xi})\leq 0\} \label{eq:region_scenario} \end{equation}
as the set of admissible decision vectors for the probabilistic constraint under $\bar{\xi}$. 
For every subset $S \subseteq [N]$, we further define the subproblem
\begin{equation}
\nu(S) :=  \min_{x\,\in\,\mathcal{X}}\,F(x) \quad \text{s.t.}\quad c(x,\xi^{(s)})\leq 0 \quad \forall s \in S. \label{eq:min_S_ccp} \tag{$S$-subproblem}
\vspace{-7pt}
\end{equation} 
 \subsection*{Assumptions}
We lay down some useful assumptions that will hold throughout the sequel. \vspace*{-.2cm}
\begin{itemize} 
\item[(A)] For every $s \in [N]$, $\pi^{(s)}>0$ and $\nu(\{s\})>-\infty$.
\item[(B)] For every $x \in \mathcal{X}$, the function $c(x,\cdot)$ is quasi-convex.
\end{itemize}
\begin{remark}
 As pointed out in \cite{Luedtke14}, Assumption (A) is without loss of generality. Indeed, if either $\pi^{(s)}=0$ or $\mathcal{X} \,\cap\, \mathcal{R}(\xi^{(s)})= \emptyset$, $\xi^{(s)}$ should be removed from the dataset of considered scenarios and the problem becomes $F^*_{\tilde{\tau}}$ with $\tilde{N}=N-1$ scenarios and $\tilde{\tau}=\tau-\pi^{(s)}$. Note that this assumption implies the \emph{boundedness} of $F^*_\tau$ when $\tau<1$ since there must be at least one $s\in [N]$ such that $z^*_s=1$ at an optimal solution $(x^*,z^*)$. On the other hand, \emph{feasibility} might be quite difficult to check \emph{a priori}. Assumption (B) represents the building block of our presolve heuristics, i.e. we extensively use the convexity of the sublevel-sets of $c(x,\cdot)$ to add scenarios into $\oplus$ and $\ominus$, see \textbf{logic-encoding}. 
 \end{remark} 

\noindent We end this section with the illustrative Example \ref{example:pfl} \& \ref{example:to_find} fulfilling our assumptions. \vspace*{-.5cm}

\subsubsection*{Probabilistic Ball Projection} 
We are provided $N$ different spatial locations, i.e. the rows of the data matrix $\mathbf{D} = (\xi^{(1)},\dots,\xi^{(N)})^T \in \mathbb{R}^{N \times p}$, a reference point in space $\bar{x} \in \mathbb{R}^p$ and a pair of norms $(\|\cdot \|_{o},\|\cdot\|_{\tilde{o}})$ with $o,\tilde{o} \in \{1,2,\infty\}$. The goal is to find the closest point $x\in \mathbb{R}^p$ from $\bar{x}$, i.e. minimizing the distance $\|x-\bar{x}\|_o$ while ensuring that it belongs to $\mathbb{B}_{\tilde{o}}(\xi,R)$ with probability of at least $1-\tau$. Assuming that $\mathbb{P}[\xi = \xi^{(s)}]=\pi^{(s)}>0$ for every $s\in [N]$, the problem reads
\begin{equation}
F^*_\tau = \min_{x \in \mathcal{X}}\, \underbrace{\|x-\bar{x}\|_o}_{F(x)}\quad \text{s.t.}\quad \mathbb{P}_{\xi}[\underbrace{\|x-\xi^{(s)}\|_{\tilde{o}}-R}_{c(x,\xi)} \leq 0]   \geq 1-\tau.
\label{eq:pfl_ccp} \tag{PBP-($p,o,\tilde{o}$)}
\end{equation}
Here, $p=d$ and $\mathcal{X} = \mathbb{B}_{\infty}(\mathbf{0}_d, \bar{R})$ is as uniform box in $\mathbb{R}^d$ with $\bar{R}>0$. \\
For any $s\in[N]$, $\mathcal{X}(\{s\}):=\mathcal{X}\,\cap\,\mathcal{R}(\xi^{(s)})= \mathbb{B}_{\tilde{o}}(\xi^{(s)},R)$ is convex and compact. Assumption (A) is satisfied with $\nu(\{s\}) = \|\textbf{proj}_{\mathcal{X}(\{s\})}(\bar{x})-\bar{x}\|_o\geq0>-\infty$ and $c(x,\xi)$ is convex (thus quasi-convex) in $\xi$ so that Assumption (B) is also verified.

\noindent We present hereafter an instance of PBP-($2,2,1$) (Example \ref{example:pfl}) with real application and one instance of PBP-$(3,2,\infty)$ (Example \ref{example:to_find}), less conventional we admit. 

\begin{example}[Probabilistic Facility Location]\label{example:pfl}
\\
\vspace{-7pt}

\noindent  One must find where to install a new heliport facility $x \in \mathbb{R}^2$ so to minimize its flying $L_2-$distance with respect to a reference hospital $\bar{x} \in \mathbb{R}^2$. The concerned population living in the surroundings is aggregated at $N$ different locations, i.e. $(\xi^{(1)},\dots,\xi^{(N)})$. A proportion $\pi^{(s)} >0$ of the population is located at position $\xi^{(s)}$ for every $s\in [N]$. The design constraint requires that an emergency happening at random among the population should be located within a radius of $R>0$ in Manhattan $L_1-$distance from the heliport with probability of a least $1-\tau$. 
\end{example}

\begin{example}[Optimal Watch Spot]\label{example:to_find}
\\
\vspace{-7pt}

\noindent In the same spirit, one can think about the following problem. One would like to be closest from $\bar{x} \in \mathbb{R}^3$ to watch an event happening there while being in a safe place with probability of at least $1-\tau$. For every $s\in [S]$, there is a $\pi^{(s)}\in (0,1)$ chance that the outcome of a random meteorological phenomenon implies the following: $\xi^{(s)}$ depicts the center of 3D uniform box of radius $R>0$ wherein one needs to stay to safely watch the aforementioned event.
\end{example}


\section{Enumerative complexity}\label{sec:feasible_set_ccp}
In this section, we show that \eqref{eq:new_min_problem} can be posed as the global minimization of an objective function $\mathcal{F}_\tau$ stemming as the minimum of a finite collection of functions. As we will unveil, the domain of these functions differ, yielding an objective $\mathcal{F}_\tau$ not everywhere continuous on $\mathcal{X}$. Recent works \cite{VanDessel24,VanDessel25} are devoted to tackling the global minimization of a minimum of a finite collection of functions. Unfortunately,  the methods presented therein are not applicable since they crucially depend on the continuity of the objective as a whole. To obtain a formulation wherein the size of the aforementioned collection is minimal, we underpin the following fact. Without loss of generality, there will always exist an optimal solution $(x^*,z^*)$ to \eqref{eq:new_min_problem}, called \emph{minimal solution}, so that \begin{equation}
    \sum_{s\in[N]}\,\pi^{(s)}z^*_s - \min_{\tilde{s}\in \{s \in [N]\,|\, z^*_s=1\}}\,\pi^{(\tilde{s})} < 1-\tau. \label{eq:consistency_minimal_solution}
\end{equation} 
Indeed, if \eqref{eq:consistency_minimal_solution} is not satisfied, $(x^*,z^*-\mathbf{e}_{\tilde{s}})$ becomes admissible and optimal for \eqref{eq:new_min_problem} and we can proceed recursively until \eqref{eq:consistency_minimal_solution} holds. We note that such $x^*$ is closely related to the concept of $(1-\tau)$ efficient point \cite{Dentcheva00,Lejeune12}.  Every optimal solution $(x^*,z^*)$ induces a \emph{selection} $\hat{S}(z^*) = \{s \in [N]\,|\,z_s^*=1\}\subseteq [N]$ of scenarios so that $F^*_\tau = F(x^*) = \nu(\hat{S}(z^*))$, recalling \eqref{eq:min_S_ccp}. \\ \\We introduce now Definition \ref{def:sub_int} describing the structure of the induced \emph{selections} by \emph{minimal solutions}. 
\begin{definition}[Minimal subset for \eqref{eq:new_min_problem}]
Let $S \subseteq [N]$. $S$ is called a minimal subset for \eqref{eq:new_min_problem} if and only if 
\begin{equation}
   1-\tau \leq \sum_{s\in S}\,\pi^{(s)}  < 1-\tau + \min_{\tilde{s}\in S}\,\pi^{(\tilde{s})}. \label{eq:consistency_minimal_solution_def}
\end{equation} 
\label{def:sub_int}
\end{definition}
\begin{remark}
Obviously, if $(x^*,z^*)$ is a \emph{minimal solution} then $\hat{S}(z^*)$ is a \emph{minimal subset}. However, if $S$ is a \emph{minimal subset} then the minimizer $x^*(S)$ of $\nu(S)$ in \eqref{eq:min_S_ccp} is not necessarily a global minimizer of \eqref{eq:new_min_problem}.
\end{remark}

\noindent We depict by $\mathbf{S}_\tau$ the collection containing every \emph{minimal subset} for \eqref{eq:new_min_problem}. Let us now highlight in Example \ref{example:equiprobable} a first clue of the combinatorial nature of \eqref{eq:new_min_problem}. We express $\mathbf{S}_\tau$ in explicit form when every scenario is equiprobable.
\begin{example}[Equiprobable scenarios] \label{example:equiprobable}\\
\vspace{-7pt}

\noindent Let $\pi^{(s)}=1/N$ for every $s \in [N]$. It comes that \eqref{eq:consistency_minimal_solution_def} can be rephrased as 
\begin{equation}
   N\cdot(1-\tau) \leq |S| < N\cdot(1-\tau) + 1. \label{eq:equiprobable_scenarios}
\end{equation}
From \eqref{eq:equiprobable_scenarios}, one can deduce that $\mathbf{S}_\tau = \big\{ S \subseteq [N]\,|\, |S| = \lceil N\cdot(1-\tau)\rceil\big\}$ so that 
\begin{equation}
\big| \mathbf{S}_\tau\big| = \binom{N}{\lceil N\cdot(1-\tau)\rceil}. \label{eq:n_equiprobable_scenarios}
\end{equation}
\end{example}

\noindent Since (at least) one global minimizer $(x^*,z^*)$ of \eqref{eq:new_min_problem} is \emph{minimal} and can be obtained by setting $x^*$ as the minimizer of $\nu(\hat{S}(z^*))$ with $\hat{S}(z^*) \in \mathbf{S}_\tau$ then a strategy to solve \eqref{eq:new_min_problem} can work as follows. One iterates over $\mathbf{S}_\tau$, trying one \emph{minimal subset} $S$ at the time by computing $\nu(S)$. Ultimately, it comes
\begin{equation}
F^*_\tau = \min_{S\,\in\,\mathbf{S}_\tau}\,\nu(S). \label{eq:enumeration_ccp}
\end{equation}
\begin{remark}
\vspace{-10pt}
When all the scenarios are equally as likely (Example \ref{example:equiprobable}), the exhaustive enumeration requested in \eqref{eq:enumeration_ccp} is straightforward to implement. Under more general distributions, one can solve an incremental sequence of feasibility problems. Starting with an empty collection $\hat{\mathbf{S}}_\tau = \emptyset$, we compute the next \emph{minimal subset} $S$ to review in \eqref{eq:enumeration_ccp} (and to include $\hat{\mathbf{S}}_\tau$ afterwards) as $\hat{S}(z) = \{s \in [N]\,|\,z_s=1\}$ where $z$ is a feasible solution of the system 
\begin{align}
 & \sum_{s=1}^N\,\pi^{(s)}\,z_s \geq 1-\tau & \label{eq:lower_support} \\ 
 & \sum_{s \,\in \,S} \, \pi^{(s)}\,z_s \leq 1-\tau-\check{\varepsilon} + \pi^{(\tilde{s})}\,+(1-z_{\tilde{s}}) &\quad \forall \tilde{s} \in [N] \label{eq:upper_support} \\
 & \sum_{s \,\in \,S} \, z_s \leq |S|-1& \quad \forall S \in \hat{\mathbf{S}}_\tau. \label{eq:discard_previous_S}
\end{align}
\end{remark}
\noindent First \eqref{eq:lower_support} and middle inequalities \eqref{eq:upper_support} above implement the requirements of \emph{minimal subsets} with $\check{\varepsilon} \in (0,\min_{s\in[N]}\, \pi^{(s)}/2]$ ensuring the strict inequality of the right-hand side of \eqref{eq:consistency_minimal_solution_def}.  One can notice that, indeed, if $z_s = 0$ (hence $s \not \in S=\hat{S}(z)$) the right-hand side of \eqref{eq:upper_support} is bigger than $1$ and the constraint is always valid. \\The last inequalities \eqref{eq:discard_previous_S} discard previously seen \emph{minimal subsets} $S$, stored in $\hat{\mathbf{S}}_\tau$. That being stated, the enumerative approaches described above quickly become impractical, even for moderate values of $N$. Noteworthy, when $\mathbf{S}_\tau$ is explicit (Example \ref{example:equiprobable}), the enumerative task becomes parallelisable across $B\geq 1$ units. \noindent Yet, unless $B=\Theta(|\mathbf{S}_\tau|)$, the minimal number of \eqref{eq:min_S_ccp} solved in series (hence the wall-clock elapsed time) grows exponentially with $N$.\\
\vspace{-7pt}

\noindent As previously announced, we close this section by posing \eqref{eq:new_min_problem} as the global minimization of $\mathcal{F}_\tau$, i.e. pointwise minimum of a finite collection of functions $f_S : \mathbb{R}^d \to \mathbb{R} \cup \{\infty\}$ for every $S \in \mathbf{S}_\tau$. The respective effective domains read 
$$ \text{dom}\,f_S = \mathcal{X}(S) := \bigcap_{s\,\in\,S}\,\Big(\mathcal{X}\,\cap\,\mathcal{R}(\xi^{(s)})\Big).$$
Thereby, one simply writes $F^*_\tau = \min_{x\in\mathbb{R}^d}\mathcal{F}_\tau(x)$ where, for every $x \in \mathbb{R}^d$,
\begin{equation}
\mathcal{F}_\tau(x)  := \min_{S\,\in\,\mathbf{S}_\tau}\,F(x)+\mathbb{I}_{\mathcal{X}(S)}(x) =   \min_{S\,\in\,\mathbf{S}_\tau}\,f_S(x). \label{eq:minimum_over_union}
\end{equation}\vspace{-9pt}
\begin{remark} In any situation, $\text{dom}\,\mathcal{F}$ is an union of $|\mathbf{S}_\tau|$ subsets, i.e. $$\text{dom}\,\mathcal{F}_\tau = \bigcup_{S \,\in\,\mathbf{S}_\tau}\,\mathcal{X}(S).$$ 
Let $\mathcal{X}$ be convex and $c(\cdot,\xi^{(s)})$ be quasi-convex for every $s \in [N]$. The domain of $\mathcal{F}_\tau$ stems as the union of finitely many convex sets. As analyzed in \cite{vanAckooij19} in the case where $\xi$ admits a log-concave continuous distribution, $\text{dom}\,\mathcal{F}_\tau$ might be convex as whole, depending on parameter $\tau$. Obviously, $\text{dom}\,\mathcal{F}_0$ is convex. In the continuous case, \cite{vanAckooij19} found that there exists a threshold $\hat{\tau}$ below which every $\tau\leq \hat{\tau}$ leads to a convex $\text{dom}\,\mathcal{F}_\tau$. It is sometimes true for discrete distributions. We illustrate this in Figure \ref{fig:pfl_experiment} where we corroborate the findings of \cite{vanAckooij19} with an empirical $\hat{\tau} \simeq 2\cdot10^{-2}$.
\vspace{-20pt}
\end{remark}
\begin{figure}[H]%
    \hspace{-20pt}
    \vspace{-25pt}
    \subfloat{{\includegraphics[scale=0.45]{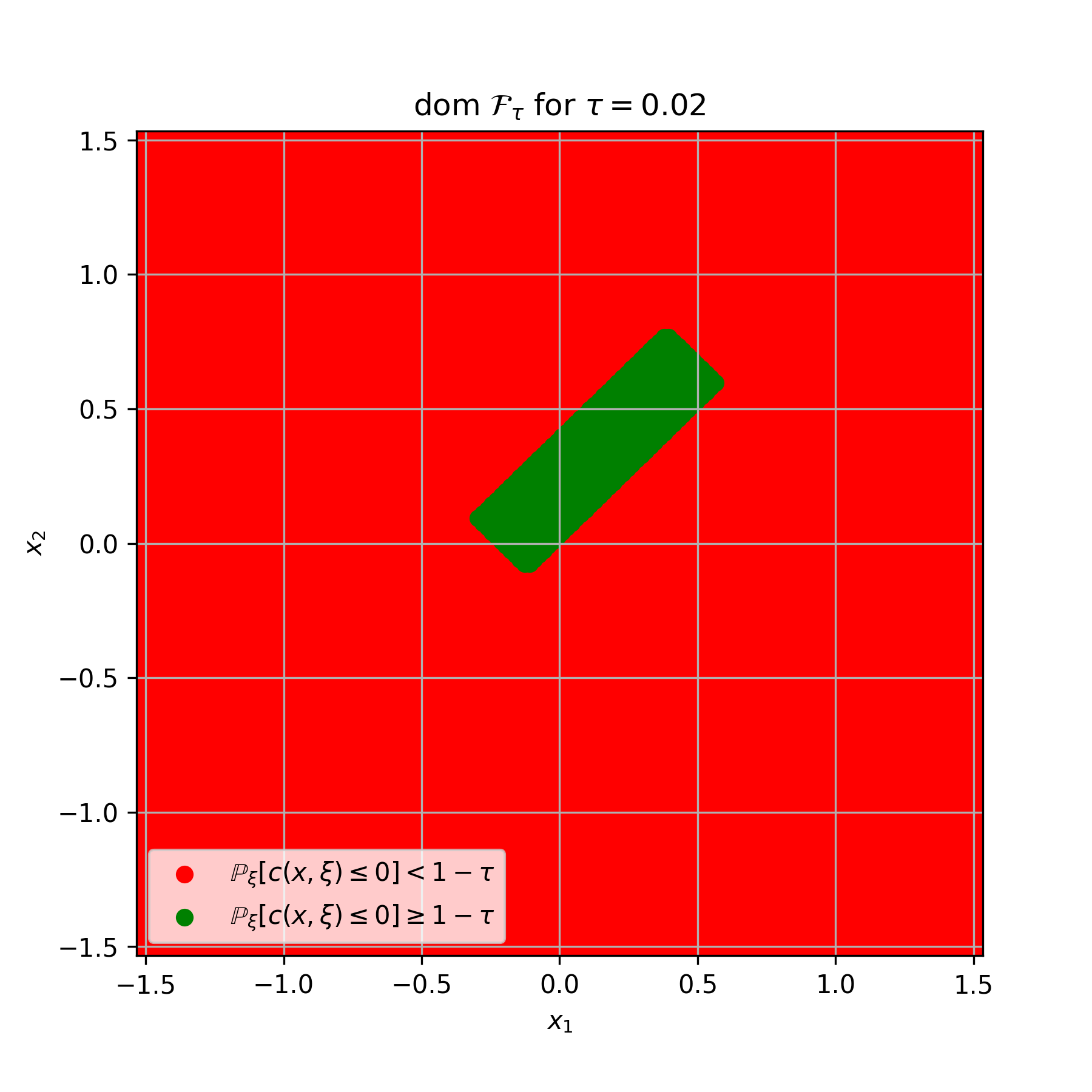} }}%
    \subfloat{{\includegraphics[scale=0.45]{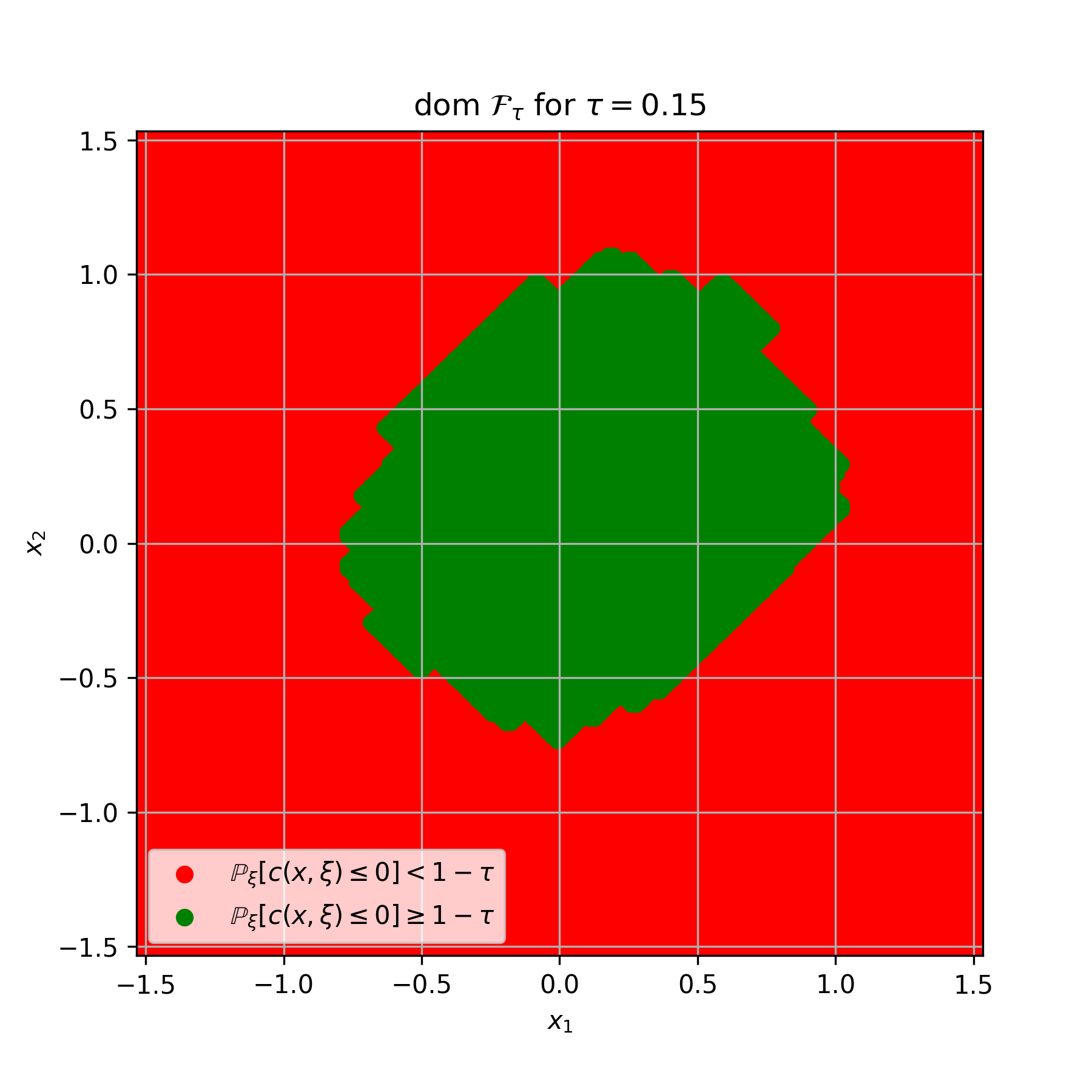} }}%
\vspace{15pt}
\caption{Probabilistic Facility Location (Example \ref{example:pfl}): convexity analysis of $\text{dom}\,\mathcal{F}_\tau$ | convex for every $\tau\leq 2\cdot10^{-1}$ (left) and nonconvex under $\tau=1.5\cdot10^{-1}$ (right).}
\label{fig:pfl_experiment}%
\end{figure}

\section{Presolve techniques} 
\label{sec:presolve_techniques}
Now that the background is all set, we can dive into our presolve techniques.\\ We emphasize the importance of Assumption (B) based on which we derive the useful Lemma \ref{lemma:consistency_quasi_ccp}. We recall that for any $z \in \{0,1\}^N$, $\hat{S}(z)=\{s \in [N]\,|\,z_s=1\}$. 

\begin{lemma}
\label{lemma:consistency_quasi_ccp}
Let $(x,z)$ be feasible for \eqref{eq:new_min_problem} and $\Xi = \textbf{conv}(\{\xi^{(\tilde{s})}\,|\,\tilde{s} \in \hat{S}(z)\})$. For every $s \in [N]$ such that $z_s=0$ and $\xi^{(s)}\in \Xi$, $(x,z+\mathbf{e}_s)$ stays feasible. 
\end{lemma}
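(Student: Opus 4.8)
The plan is to verify that $(x,z+\mathbf{e}_s)$ satisfies each of the three families of constraints defining \eqref{eq:new_min_problem} and to isolate the single nontrivial requirement. Since $z_s=0$, flipping this coordinate to $1$ keeps $z+\mathbf{e}_s\in\{0,1\}^N$, and the probabilistic budget constraint only improves: $\sum_t\,\pi^{(t)}(z+\mathbf{e}_s)_t = \sum_t\,\pi^{(t)}z_t + \pi^{(s)}\geq 1-\tau$, because $(x,z)$ was already feasible and $\pi^{(s)}>0$ by Assumption (A). The big-$M$ inequalities indexed by $t\neq s$ are untouched, hence remain valid. The only new obligation comes from the big-$M$ inequality at index $s$: as its binary is now $1$, feasibility demands $c(x,\xi^{(s)})\leq M^{(s)}\cdot(1-1)=0$. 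Everything therefore reduces to proving $c(x,\xi^{(s)})\leq 0$.

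To establish this I would invoke Assumption (B). Fixing the (unchanged) decision vector $x\in\mathcal{X}$, I would consider the zero-sublevel set $L:=\{\bar{\xi}\in\mathbb{R}^p\,|\,c(x,\bar{\xi})\leq 0\}$, which is convex precisely because $c(x,\cdot)$ is quasi-convex. First I would argue that every generator of $\Xi$ lies in $L$: for each $\tilde{s}\in\hat{S}(z)$ one has $z_{\tilde{s}}=1$, so feasibility of $(x,z)$ forces $c(x,\xi^{(\tilde{s})})\leq M^{(\tilde{s})}\cdot(1-z_{\tilde{s}})=0$, i.e. $\xi^{(\tilde{s})}\in L$. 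Convexity of $L$ then yields $\Xi=\textbf{conv}(\{\xi^{(\tilde{s})}\,|\,\tilde{s}\in\hat{S}(z)\})\subseteq L$. Since the hypothesis supplies $\xi^{(s)}\in\Xi$, we conclude $\xi^{(s)}\in L$, that is $c(x,\xi^{(s)})\leq 0$, which is exactly the missing inequality.

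I do not expect a genuine obstacle here; the argument is a one-shot application of the convexity of sublevel sets. The only point requiring care is to read feasibility of $(x,z)$ correctly, namely that $z_{\tilde{s}}=1$ collapses the right-hand side $M^{(\tilde{s})}\cdot(1-z_{\tilde{s}})$ to $0$ and thereby pins the corresponding scenarios inside $L$ — this is what lets the convex hull $\Xi$ be absorbed into $L$. It is worth emphasizing that the conclusion is independent of the objective $F$ and of the magnitude of the big-$M$ constants, relying solely on the quasi-convexity of $c(x,\cdot)$; this is precisely why Assumption (B) is the structural ingredient underpinning the \emph{safe-set} logic of the presolve.
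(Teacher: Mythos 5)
Your proof is correct and follows essentially the same route as the paper: the paper applies quasi-convexity directly via the inequality $c(x,\xi^{(s)})\leq \max_{\tilde{s}\in\hat{S}(z)}c(x,\xi^{(\tilde{s})})\leq 0$ for a convex combination, whereas you invoke the equivalent sublevel-set characterization, and you additionally spell out the (routine) verification of the probability-budget and remaining big-$M$ constraints that the paper leaves implicit. No gap.
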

\begin{proof}
By construction, $c(x,\xi^{(\tilde{s})})\leq 0$ for every $\tilde{s} \in [N]$ such that $z_{\tilde{s}}=1$. Let $\xi^{(s)} \in \Xi$ so that there exists weights $q^{(\tilde{s})}\in [0,1]$ summing up to $1$ with $\sum_{\tilde{s} \,\in\, \hat{S}(z)}\,q^{(\tilde{s})}\,\xi^{(\tilde{s})}=\xi^{(s)}.$ By quasi-convexity (Assumption (B)) of $c(x,\cdot)$,$$ c(x,\xi^{(s)}) \leq \max_{\tilde{s}\,\in\, \hat{S}(z)}\,c(x,\xi^{(\tilde{s})}) \leq 0.$$ 
It comes that if $z_s=0$ then it is possible to set $z_s=1$ without changing the value of $x$ hence certifying that $(x,z +\mathbf{e}_s)$ is a feasible solution of \eqref{eq:new_min_problem}. 
\end{proof}
\noindent Thus, one should consider feasible solutions $(x,z)$ of \emph{minimal volume} in the sense that there is no \emph{minimal subset} $S \subset \{s \in [N]\,|\,\xi^{(s)}\,\in\,\textbf{conv}(\{\xi^{(\tilde{s})}\,|\,\tilde{s}\in\hat{S}(z)\})\}$, \begin{equation}\textbf{conv}(\{\xi^{(s)}\,|\,s\in S\}) \subsetneq \textbf{conv}(\{\xi^{(\tilde{s})}\,|\,\tilde{s} \in \hat{S}(z)\}). \label{eq:consistency_ind} \end{equation} 
Otherwise, $(x,\hat{z})$ with $\hat{z}_s=1$ for every $s \in S$ and $\hat{z}_s=0$ is also feasible and furthermore, one has $F^*_\tau \leq \nu(S) \leq \nu(\hat{S}(z))$. When scenarios are equiprobable, for \emph{minimal volume} solutions, \emph{minimal subsets} $S=\hat{S}(z)$ must satisfy\begin{equation} 
s \in S \Leftrightarrow \xi^{(s)} \in \textbf{conv}(\{\xi^{(\tilde{s})}\,|\,\tilde{s} \in S\}).\label{eq:two_sides_cvx_incl}\end{equation} This is not true for non-uniform distributions as shown in Example \ref{example:simple_inc}.
\begin{example}\label{example:simple_inc}
Let $N=5$, $d=2$ and $\tau = 15/100$. For $s \in [4]$, $\pi^{(s)}=22/100$ and $\{\xi{(s)}\,|\,s\in[4]\} = \textbf{vert}(\mathbb{B}_\infty(\mathbf{0}_2,1))$ so that $\pi^{(5)}=1-4\cdot22/100=12/100$. The only possible \emph{minimal subset} is $S = [4] = \hat{S}(\mathbf{1}_5-\mathbf{e}_5)$. Then, if $\xi^{(5)} = \mathbf{0}_2$, $\xi^{(5)} \in \text{int}(\mathbb{B}_\infty(\mathbf{0}_2,1))$ yet $5\not \in S$.
\end{example}

\vspace{5pt}
\noindent We define now two types of subsets for the scenarios, i.e. \emph{safe} and \emph{pruned sets} (see Definition \ref{def:safe_set} \& \ref{def:pruned_set}), each of which serving as backbone of one or more techniques and allowing to compute upper/lower bounds. 

\begin{definition}[Safe set]\label{def:safe_set} $\oplus \subseteq [N]$ is a safe set if including the equalities $z_s=1$ for every $s \in \oplus$ in \eqref{eq:new_min_problem} does not increase its optimal value $F^*_\tau$.
\end{definition}
\begin{definition}[Pruned set]\label{def:pruned_set} $\ominus \subseteq [N]$ is a pruned set if including the equalities $z_s=0$ for every $s \in \ominus$ in \eqref{eq:new_min_problem} does not increase its optimal value $F^*_\tau$.
\end{definition}
\begin{remark}\label{remark:consistency_two_sides} As a direct consequence of both definitions, if one looks after a sound upper-bound like $\hat{F}^*_\tau=\nu(S) \geq F^*_\tau$, the subset $S\subseteq [N]$ should be such that 
\begin{equation}
S \in \mathbf{T}(\oplus,\ominus) := \Big\{ S \subseteq [N]\backslash \ominus\,|\,\oplus \subseteq S \wedge \sum_{s\,\in\,S}\,\pi^{(s)}\geq 1-\tau\Big\}.
    \label{eq:sound_subset_S}
\end{equation}
Similar to \eqref{eq:enumeration_ccp}, one derives $F^*_\tau = \min_{S\,\in\,\mathbf{T}(\oplus,\ominus)}\,\nu(S)$. Moreover, minimizing $F$ considering only constraints induced by $\oplus$ yields a lower-bound $ \check{F}^*_\tau=\nu(\oplus) \leq F^*_\tau$.
\end{remark}
\subsection{\emph{Safing} techniques} \label{subsec:safing}
In this subsection, we present sufficient conditions that help to build-up \emph{safe sets} incrementally. That is, starting from $\oplus=\emptyset$, \emph{selectable} indices $s \in [N]\backslash (\oplus\,\cup\,\ominus)$ can be tried\footnote{We emphasize that multiple indices can be processed at once, i.e. in parallel.} and if one of the conditions below is triggered, the \emph{safe set} is updated as $\oplus \gets \oplus\, \cup\, \{s\}$. We start by presenting a generic condition (Proposition \ref{prop:np_sel}) applying independently of Assumption (B). The second one (Proposition \ref{prop:ns_sel}), more involved, is new and specific to this work. Note however that, from a practical point of view, it can only be efficiently implemented when $p=2$ or $p=3$.\\

\noindent First, it happens that based on valid inequalities for \eqref{eq:new_min_problem}, the remaining feasible domain (or some relaxation of it) is small enough so that a given constraint $c(\cdot,\xi^{(s)})$ becomes non-positive everywhere therein. If so, $s \in \oplus$.

\begin{proposition}[Non-positivity selection]\label{prop:np_sel}
Let $\hat{F}^*_\tau \geq F^*_\tau$. For any $S \subseteq \oplus$,\[ 0 \geq \max_{x\,\in\,\mathcal{X},\,F(x) \leq F^*_\tau}\,c(x,\xi^{(s)}) \quad\text{s.t.}\quad c(x,\xi^{(\tilde{s})})\leq 0 \quad \forall \tilde{s}\in S \] implies that $s$ belongs to $\oplus$.
\end{proposition}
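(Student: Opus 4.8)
The plan is to prove the claim directly from the definition of a \emph{safe set} (Definition \ref{def:safe_set}): showing that $s$ may be added to $\oplus$ amounts to exhibiting one optimal solution of \eqref{eq:new_min_problem} whose $s$-th switching variable equals $1$. I would therefore start from an arbitrary optimal solution, produce from it an optimal solution that additionally satisfies $z_s=1$, and use the non-positivity hypothesis as the bridge between the geometry of $x^*$ and the feasibility of the flipped switch.

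First I would invoke Assumption (A) with $\tau<1$, which (by the remark following it) guarantees that $F^*_\tau$ is finite, so that an optimal solution $(x^*,z^*)$ is available. Since $\oplus$ is already a safe set, Definition \ref{def:safe_set} lets me assume without loss of generality that $z^*_{\tilde s}=1$ for every $\tilde s \in \oplus$; otherwise I replace $(x^*,z^*)$ by the optimal solution guaranteed by the safe-set property. For such a solution the switching constraint $c(x^*,\xi^{(\tilde s)}) \leq M^{(\tilde s)}(1-z^*_{\tilde s})$ collapses to $c(x^*,\xi^{(\tilde s)}) \leq 0$ for each $\tilde s \in \oplus$, hence in particular for each $\tilde s \in S \subseteq \oplus$.

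Next I would observe that $x^*$ is feasible for the inner maximization: it lies in $\mathcal{X}$, it satisfies $F(x^*)=F^*_\tau$ (so $F(x^*)\leq F^*_\tau$), and it meets $c(x^*,\xi^{(\tilde s)})\leq 0$ for all $\tilde s \in S$. Because a feasible point cannot exceed the maximum, the hypothesis gives
\[ c(x^*,\xi^{(s)}) \;\leq\; \max_{x\,\in\,\mathcal{X},\,F(x)\leq F^*_\tau}\,c(x,\xi^{(s)}) \quad \text{s.t.} \quad c(x,\xi^{(\tilde s)})\leq 0 \ \ \forall \tilde s \in S \;\leq\; 0. \]
Consequently the switching constraint for scenario $s$ stays valid with $z_s=1$, since its right-hand side becomes $0$ while its left-hand side is non-positive. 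Setting $\hat z := z^* + (1-z^*_s)\mathbf{e}_s$ (that is, forcing $z_s=1$) leaves $x^*$ untouched, can only increase $\sum_{s}\pi^{(s)}z_s$ and hence preserves the probabilistic constraint, and keeps the objective equal to $F^*_\tau$. Thus $(x^*,\hat z)$ is feasible and optimal, so imposing $z_s=1$ does not raise $F^*_\tau$ and $\oplus \cup \{s\}$ is again a safe set, as required.

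The argument itself is short, so the main difficulties are of a careful-bookkeeping nature. The chief one is the apparent circularity of writing $F(x)\leq F^*_\tau$ inside a certificate meant to be evaluated \emph{before} $F^*_\tau$ is known; I would resolve this exactly as the preface $\hat F^*_\tau \geq F^*_\tau$ suggests, by replacing $F^*_\tau$ with any valid upper bound $\hat F^*_\tau$ in the constraint. This only enlarges the feasible region of the maximization, so the resulting test is more conservative but remains sound, since $x^*$ still satisfies $F(x^*)\leq F^*_\tau \leq \hat F^*_\tau$. A second point to handle cleanly is the degenerate case where the inner problem is infeasible: by the sign convention its value is $-\infty\leq 0$, yet the feasibility of $x^*$ established above shows this case never actually arises when $F^*_\tau$ is finite, so the conclusion holds either way.
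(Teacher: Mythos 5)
Your proof is correct and follows essentially the same reasoning the paper sketches informally just before the statement (the paper gives no formal proof of this proposition): pass to an optimal solution with $z^*_{\tilde{s}}=1$ for all $\tilde{s}\in\oplus$, observe that $x^*$ is then feasible for the inner maximization, conclude $c(x^*,\xi^{(s)})\leq 0$, and flip the switch without affecting feasibility or the objective. Your handling of the two bookkeeping points --- reading $F(x)\leq F^*_\tau$ as the intended $F(x)\leq \hat{F}^*_\tau$ (which only enlarges the feasible region and keeps the test sound) and dismissing the degenerate infeasible inner problem --- is also correct.
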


\noindent Second, conditional to the current states of sets $\oplus$ and $\ominus$, if $s$ belongs to every \emph{sound} subset $S \in \mathbf{T}(\oplus,\ominus)$ (see \eqref{eq:consistency_ind}), one can conclude that $s \in \oplus$. Through the lens of Lemma \ref{lemma:consistency_quasi_ccp}, this statement is equivalent to the following condition. 

\begin{proposition}[Non-separability induction]\label{prop:ns_sel} If there exists no $S \in \mathbf{T}(\oplus,\ominus)$ such that $\xi^{(s)}$ is \textbf{not} contained in $\Xi = \textbf{conv}(\xi^{(\tilde{s})}\,|\,\tilde{s} \in S)$ then $s \in \oplus$.
\end{proposition}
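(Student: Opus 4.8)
The plan is to read the hypothesis through its double negation and then reduce the claim to a single application of Lemma~\ref{lemma:consistency_quasi_ccp} at an optimal \emph{sound} selection. First I would unfold the statement: ``there exists no $S \in \mathbf{T}(\oplus,\ominus)$ such that $\xi^{(s)}$ is \textbf{not} contained in $\Xi$'' is logically equivalent to asserting that for \emph{every} $S \in \mathbf{T}(\oplus,\ominus)$ one has $\xi^{(s)} \in \textbf{conv}(\{\xi^{(\tilde{s})}\,|\,\tilde{s} \in S\})$. This universally quantified form is the one I would actually use.

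Next I would invoke the identity $F^*_\tau = \min_{S \in \mathbf{T}(\oplus,\ominus)} \nu(S)$ recorded in Remark~\ref{remark:consistency_two_sides}, which is legitimate precisely because $\oplus$ is \emph{safe} and $\ominus$ is \emph{pruned}. Let $S^\star \in \mathbf{T}(\oplus,\ominus)$ attain this minimum and let $x^\star$ be a minimizer of the associated \eqref{eq:min_S_ccp}, so that $F(x^\star) = \nu(S^\star) = F^*_\tau$ and $c(x^\star,\xi^{(\tilde{s})}) \leq 0$ for all $\tilde{s} \in S^\star$. Defining $z^\star$ by $z^\star_{\tilde{s}} = 1 \Leftrightarrow \tilde{s} \in S^\star$, the pair $(x^\star,z^\star)$ is feasible and optimal for \eqref{eq:new_min_problem} with $\hat{S}(z^\star) = S^\star$.

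The crux is then to certify that $z_s$ may be forced to $1$ at no cost. Applying the rephrased hypothesis to $S^\star$ gives $\xi^{(s)} \in \textbf{conv}(\{\xi^{(\tilde{s})}\,|\,\tilde{s} \in \hat{S}(z^\star)\})$. If $s \in S^\star$ there is nothing to prove; otherwise $z^\star_s = 0$ and Lemma~\ref{lemma:consistency_quasi_ccp} guarantees that $(x^\star, z^\star + \mathbf{e}_s)$ remains feasible, with unchanged objective $F(x^\star) = F^*_\tau$. Since $\oplus \subseteq S^\star$, this solution satisfies $z_{\tilde{s}} = 1$ for every $\tilde{s} \in \oplus \cup \{s\}$ while attaining $F^*_\tau$; because imposing equalities can only raise the optimal value, adding $z_s = 1$ to \eqref{eq:new_min_problem} leaves $F^*_\tau$ unchanged. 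By Definition~\ref{def:safe_set} this is exactly the statement that $\oplus \cup \{s\}$ is a safe set, i.e. $s$ can be placed in $\oplus$.

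The step I expect to be the main obstacle is ensuring that the optimal selection used in the argument genuinely lies in $\mathbf{T}(\oplus,\ominus)$, so that the (universal) hypothesis can be applied to it: this is precisely what the identity $F^*_\tau = \min_{S \in \mathbf{T}(\oplus,\ominus)} \nu(S)$ provides, and it silently relies on $\oplus$ being safe and $\ominus$ pruned. One should also verify that the quasi-convexity premise of Lemma~\ref{lemma:consistency_quasi_ccp} is in force, which holds under Assumption~(B); the remainder is bookkeeping between the selection $\hat{S}(z^\star)$ and its convex hull $\Xi$.
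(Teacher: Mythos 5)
Your argument is essentially the paper's own proof: pick an optimal selection $S^\star\in\mathbf{T}(\oplus,\ominus)$, apply the (universally read) hypothesis to it, and invoke Lemma~\ref{lemma:consistency_quasi_ccp} to append $s$ at no cost, exactly as the paper does. The only point the paper treats that you skip is the degenerate case where \eqref{eq:new_min_problem} is infeasible ($F^*_\tau=+\infty$), in which your minimizer $x^\star$ does not exist; there the conclusion is immediate since forcing $z_s=1$ cannot change an infinite optimal value, so this is a trivial omission rather than a real gap.
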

\begin{proof} We need to consider two outcomes. Either problem \eqref{eq:new_min_problem} is \emph{infeasible} and $F^*_\tau=\infty$ or there exists an optimal solution $(x^*,z^*)$ with $\hat{S}(z^*)\in \mathbf{T}(\oplus,\ominus)$. In the first case, it comes immediately that $s\in \oplus$ since, by definition, this equality will not affect the optimal value of the problem. Otherwise, if follows that either $z^*_s=1$ already or $z^*_s=0$ and $\xi^{(s)} \in \textbf{conv}(\xi^{(\tilde{s})}\,|\,\tilde{s} \in \hat{S}(z^*)\})$. In this latter situation, Lemma \ref{lemma:consistency_quasi_ccp} ensures that $(x^*,z^*+\mathbf{e}_s)$ stays \emph{feasible} hence optimal by hypothesis. Thereby, one can include $s$ in $\oplus$.
\end{proof}

\noindent Finally, we invoke Corollary \ref{coro:implied} that allows to expand a \emph{safe set} $\oplus$ with all the \emph{selectable} scenarios falling within the convex hull made of the scenarios of $\oplus$. 
\begin{corollary} The set $\{s \in [N]\,|\,\xi^{(s)} \in\,\textbf{conv}(\{\xi^{(\tilde{s})}\,|\,\tilde{s}\in \oplus \})\}$ is (also) safe. 
 \label{coro:implied} 
\end{corollary}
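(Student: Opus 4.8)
The plan is to show that the corollary follows almost immediately from Proposition~\ref{prop:ns_sel} combined with Lemma~\ref{lemma:consistency_quasi_ccp}. The target set is
\[
\mathcal{C} := \{s \in [N]\,|\,\xi^{(s)} \in \textbf{conv}(\{\xi^{(\tilde{s})}\,|\,\tilde{s}\in \oplus \})\}.
\]
Pick any $s \in \mathcal{C}$; I want to certify that setting $z_s=1$ does not raise $F^*_\tau$, i.e. that $s$ is safe. First I would reduce to the feasible case: if \eqref{eq:new_min_problem} is infeasible then $F^*_\tau=\infty$ and adding the equality $z_s=1$ trivially cannot increase it, so $s\in\oplus$. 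Otherwise, take an optimal solution $(x^*,z^*)$.

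The key step is to exploit that $\oplus$ is already a safe set. By Definition~\ref{def:safe_set}, I may assume without loss of generality that $z^*_{\tilde{s}}=1$ for every $\tilde{s}\in\oplus$, since imposing those equalities leaves $F^*_\tau$ unchanged and hence preserves optimality of some solution. Consequently $\oplus \subseteq \hat{S}(z^*)$, which gives the chain of convex-hull inclusions
\[
\xi^{(s)} \in \textbf{conv}(\{\xi^{(\tilde{s})}\,|\,\tilde{s}\in\oplus\}) \subseteq \textbf{conv}(\{\xi^{(\tilde{s})}\,|\,\tilde{s}\in\hat{S}(z^*)\}) =: \Xi.
\]
Thus $\xi^{(s)}\in\Xi$. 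Now Lemma~\ref{lemma:consistency_quasi_ccp} applies directly: if $z^*_s=0$ then $(x^*,z^*+\mathbf{e}_s)$ stays feasible with the same $x^*$, hence the same objective value $F(x^*)=F^*_\tau$, so it remains optimal; if $z^*_s=1$ there is nothing to prove. Either way, $z_s$ can be fixed to $1$ without increasing the optimal value, so $s\in\oplus$. Since $s\in\mathcal{C}$ was arbitrary, the whole set $\mathcal{C}$ is safe.

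I expect the only delicate point to be the first move in the previous paragraph, namely justifying that one may take an optimal solution in which all indices of $\oplus$ are already selected. This is exactly the content of the safe-set definition (imposing $z_{\tilde s}=1$ for $\tilde s\in\oplus$ does not increase $F^*_\tau$), so it is really a bookkeeping observation rather than a substantive obstacle; I would state it explicitly to make the hull inclusion $\textbf{conv}(\oplus\text{-points})\subseteq\Xi$ legitimate. Everything else is monotonicity of the convex hull under set inclusion plus a single invocation of Lemma~\ref{lemma:consistency_quasi_ccp}, so the corollary is genuinely a corollary of Proposition~\ref{prop:ns_sel} specialized to selections containing $\oplus$.
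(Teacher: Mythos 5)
Your argument is correct and is exactly the expansion of the paper's one-line proof, which also derives the corollary directly from Lemma~\ref{lemma:consistency_quasi_ccp}. The only point worth making explicit is that safety of the whole set follows because your argument is uniform in $s$: the same optimal pair $(x^*,z^*)$ with $\oplus\subseteq\hat{S}(z^*)$ certifies $c(x^*,\xi^{(s)})\leq 0$ for every $s$ in the set simultaneously, so all the corresponding $z_s$ can be switched to $1$ at once without changing $x^*$ (safety of each singleton alone would not suffice in general).
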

\begin{proof} The proof readily follows from Lemma \ref{lemma:consistency_quasi_ccp}.
\vspace{-15pt}
\end{proof}

\begin{figure}[H]%
    \hspace{-20pt}
    \vspace{-25pt}
    \subfloat{{\includegraphics[scale=0.45]{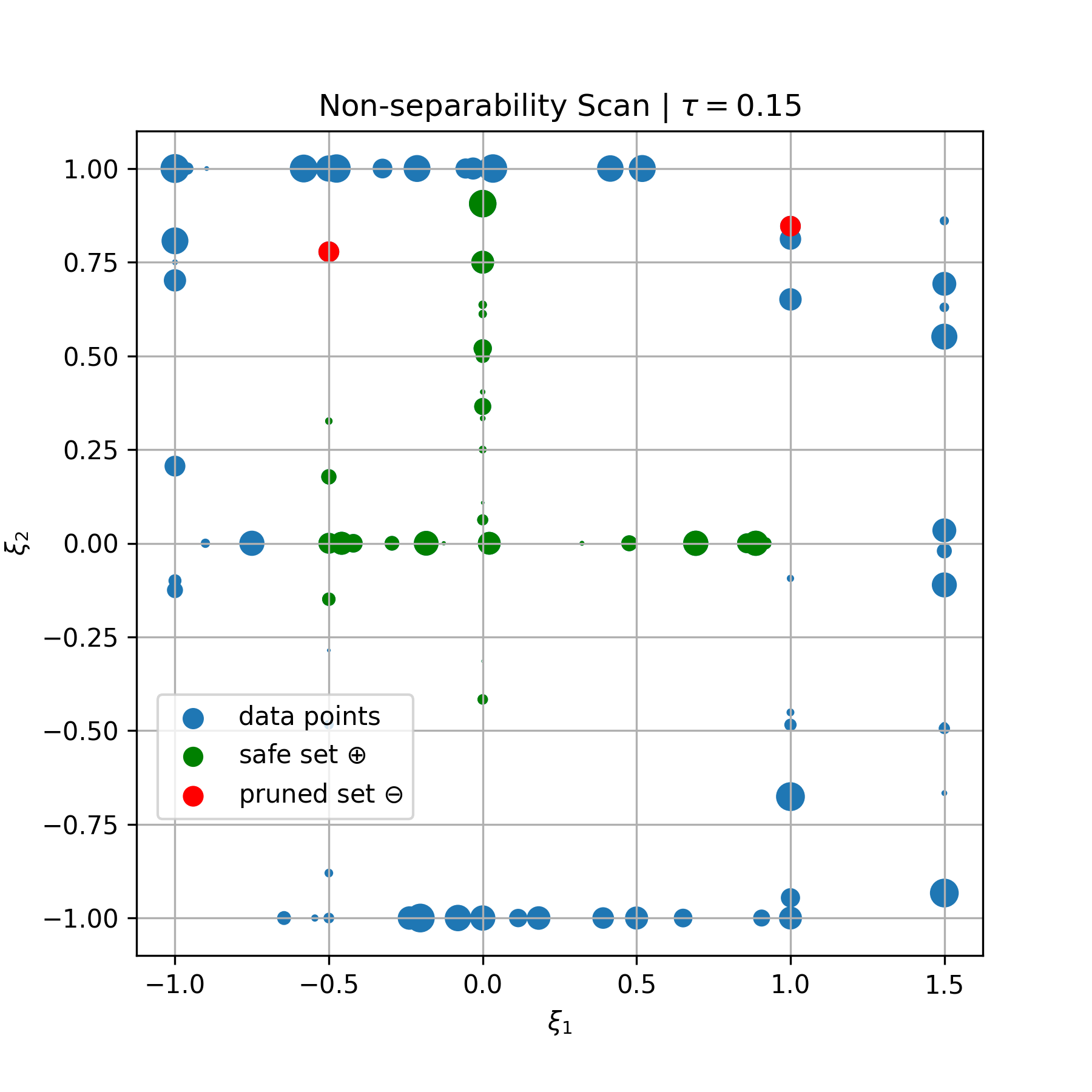} }}%
    \subfloat{{\includegraphics[scale=0.45]{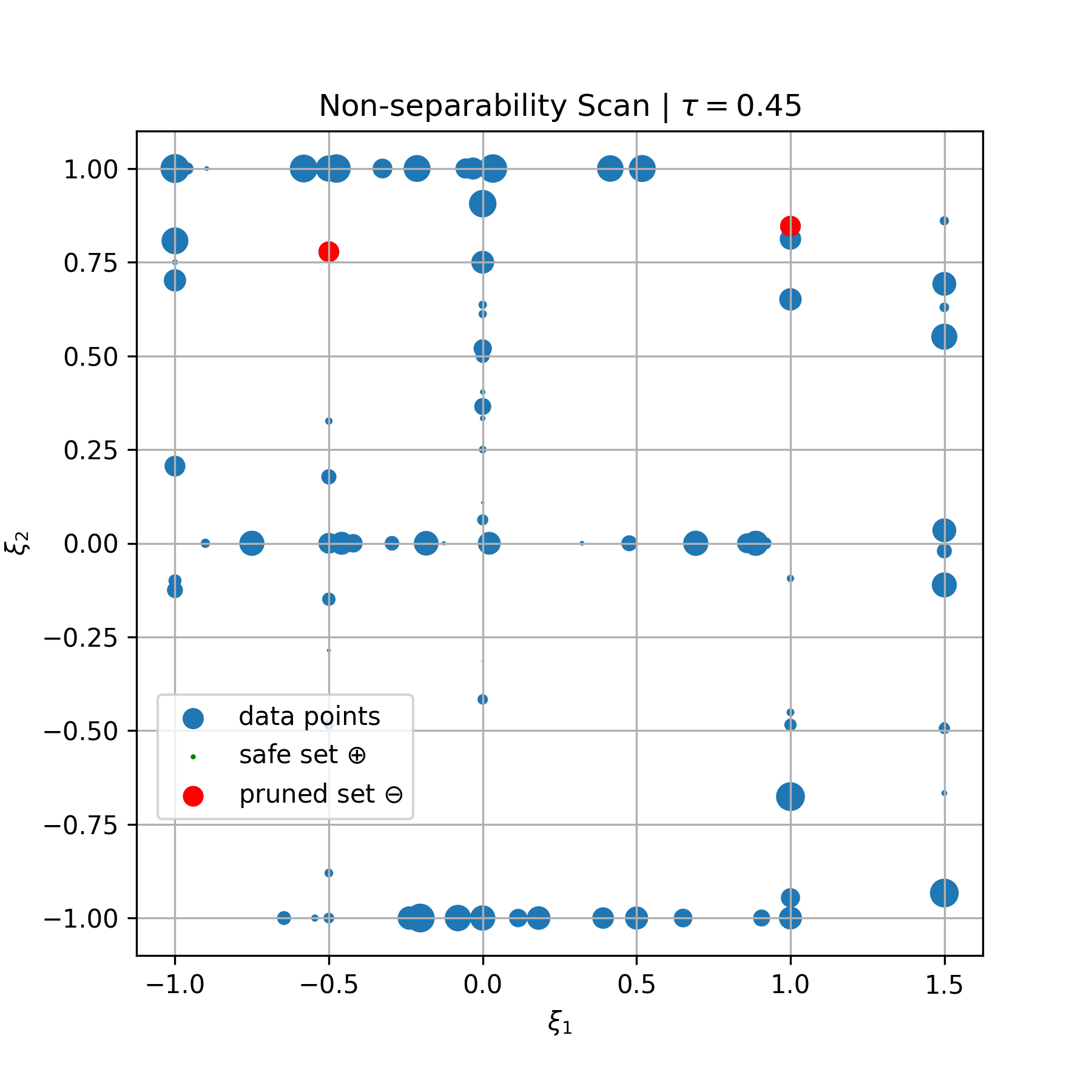} }}%
\vspace{15pt}
\caption{Probabilistic Facility Location (Example \ref{example:pfl}): \emph{safe set} incremental building based on Proposition \ref{prop:ns_sel} ; starting with $\oplus=\emptyset$ and $\ominus=\{11,74\}$ (toy example) | \\Init. $|\mathbf{T}(\oplus,\ominus)|$ smaller (left) than (right) hence more non-separable indices $s$.}
\label{fig:pfl_experiment}%
\vspace{-5pt}
\end{figure}
\subsection{\emph{Pruning} techniques}\label{subsec:pruning}
In the same spirit as Section \ref{subsec:safing}, yet diametrically opposite in goal, we aim now at discarding scenarios. Again, if one of the conditions below is satisfied, the \emph{pruned set} is updated as $\ominus \gets \ominus\, \cup\, \{s\}$.  \\

\noindent Akin to Proposition \ref{prop:np_sel}, it can happen, \emph{at contrario}, that the remaining feasible domain (or some relaxation of it) is small enough so that a given constraint $c(\cdot,\xi^{(s)})$ becomes strictly positive everywhere therein. If so, $s \in \ominus$.

\begin{proposition}[Strict-positivity exclusion]\label{prop:nn_excl}
Let $\hat{F}^*_\tau \geq F^*_\tau$. For any $S \subseteq \oplus$, 
$$ 0 < \min_{x\,\in\,\mathcal{X},\,F(x) \leq F^*_\tau}\,c(x,\xi^{(s)}) \quad\text{s.t.}\quad c(x,\xi^{(\tilde{s})})\leq 0 \quad \forall \tilde{s}\in S$$
implies that $s$ belongs to $\ominus$.
\end{proposition}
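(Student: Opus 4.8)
The plan is to mirror the reasoning behind Proposition~\ref{prop:np_sel} but drive the switching variable $z_s$ to $0$ instead of to $1$: I will exhibit an optimal solution of \eqref{eq:new_min_problem} in which $z_s=0$, which by Definition~\ref{def:pruned_set} is exactly what certifies $s\in\ominus$. First I would dispose of the degenerate case. If \eqref{eq:new_min_problem} is infeasible, then $F^*_\tau=+\infty$ and appending the equality $z_s=0$ cannot restore feasibility, so the optimal value remains $+\infty$ and $s\in\ominus$ holds trivially. Otherwise, Assumption (A) together with $\tau<1$ guarantees that $F^*_\tau$ is finite, so an optimal solution exists and the substantive argument applies.

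Next, since $\oplus$ is a \emph{safe set} and $S\subseteq\oplus$, Definition~\ref{def:safe_set} lets me select an optimal solution $(x^*,z^*)$ of \eqref{eq:new_min_problem} in which $z^*_{\tilde s}=1$ for every $\tilde s\in\oplus$, hence in particular for every $\tilde s\in S$. Reading off the big-$M$ constraints, $z^*_{\tilde s}=1$ forces $c(x^*,\xi^{(\tilde s)})\le M^{(\tilde s)}(1-1)=0$ for all $\tilde s\in S$. Combined with $x^*\in\mathcal X$ and $F(x^*)=F^*_\tau\le\hat F^*_\tau$, this shows that $x^*$ is admissible for the minimization appearing in the statement. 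Its optimal value being strictly positive by hypothesis, I conclude $c(x^*,\xi^{(s)})>0$.

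Finally I would invoke the big-$M$ constraint attached to scenario $s$ itself, namely $c(x^*,\xi^{(s)})\le M^{(s)}(1-z^*_s)$. Were $z^*_s=1$, the right-hand side would vanish and force $c(x^*,\xi^{(s)})\le 0$, contradicting the strict positivity just established; hence $z^*_s=0$. Therefore $(x^*,z^*)$ is an optimal solution already satisfying $z_s=0$, so imposing this equality leaves $F^*_\tau$ unchanged, giving $s\in\ominus$.

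The one delicate point is the appeal to the safe set in the second step: the whole argument hinges on the existence of an optimal solution with every index of $\oplus$ switched on, and this is precisely the content of Definition~\ref{def:safe_set} (and the reason the hypothesis restricts $S\subseteq\oplus$). Once that optimal solution is in hand, the remaining steps are a direct read of the big-$M$ encoding and present no real obstacle.
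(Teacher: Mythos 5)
Your proof is correct, and since the paper states Proposition~\ref{prop:nn_excl} without any proof, your argument supplies exactly the intended one: use the safe-set property to pick an optimal $(x^*,z^*)$ with $z^*_{\tilde s}=1$ on $\oplus\supseteq S$, observe that $x^*$ is then feasible for the auxiliary minimization so $c(x^*,\xi^{(s)})>0$, and conclude from the big-$M$ constraint that $z^*_s=0$, which by Definition~\ref{def:pruned_set} certifies $s\in\ominus$. You also handle the two genuine edge cases (infeasibility of \eqref{eq:new_min_problem}, and the budget constraint $F(x)\le F^*_\tau$ versus $\hat F^*_\tau$ -- evidently a typo in the statement) in a way that keeps the argument airtight.
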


\noindent It is easy to observe that if a \emph{safe set} $\oplus$ entails enough probability weight, i.e. $\sum_{s\,\in\,\oplus}\,\pi^{(s)}\geq 1-\tau$  (e.g. a \emph{minimal subset}), it is optimal in the sense $\nu(\oplus)=F^*_\tau$.\\Otherwise, $\oplus$ must include (at least) one more index $s \in [N]\backslash (\oplus\,\cup\,\ominus)$. If optimizing \eqref{eq:new_min_problem} conditional to $z_{\tilde{s}}=1$ for every $\tilde{s} \in \oplus\, \cup\, \{s\}$  yields a value, i.e. $\nu(\oplus\, \cup\, \{s\})$,  falling strictly above $\hat{F}^*_\tau\geq F^*_\tau$ then the hypothesis that $z_s=1$ would not deteriorate the optimal value of \eqref{eq:new_min_problem} is rejected.
\begin{proposition}[Sub-optimality exclusion]\label{prop:subopt_excl}
Let $\hat{F}^*_\tau \geq F^*_\tau$ and let $\oplus \subseteq [N]$ be a non-optimal safe set . If $\nu(\oplus \cup \{s\})>\hat{F}^*_\tau$ then $s \in \ominus$. Moreover, it holds 
\begin{equation} \check{F}^*_\tau = \min_{s\, \in\, [N]\backslash (\oplus\, \cup\, \ominus)}\, \nu(\oplus\, \cup\, \{s\}) \leq F^*_\tau.\label{eq:often_useless_LB}\end{equation}
\end{proposition}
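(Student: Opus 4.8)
The plan is to treat the two assertions separately: first the pruning rule, namely that $\nu(\oplus\cup\{s\})>\hat{F}^*_\tau$ forces $s\in\ominus$, and then the lower bound \eqref{eq:often_useless_LB}. For the pruning rule I would argue by exhibiting an optimal solution of \eqref{eq:new_min_problem} that sets $z_s=0$, which is precisely what Definition \ref{def:pruned_set} requires. The only nontrivial case is $F^*_\tau<\infty$, since if \eqref{eq:new_min_problem} is infeasible then $\hat{F}^*_\tau=+\infty$ and the hypothesis $\nu(\oplus\cup\{s\})>\hat{F}^*_\tau$ is vacuous. Invoking that $\oplus$ is a \emph{safe} set, Definition \ref{def:safe_set} guarantees an optimal solution $(x^*,z^*)$ with $F(x^*)=F^*_\tau$ and $z^*_{\tilde{s}}=1$ for every $\tilde{s}\in\oplus$. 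I then dichotomize on the value of $z^*_s$. If $z^*_s=0$, the pair $(x^*,z^*)$ is itself an optimum with $z_s=0$, so adding the equality $z_s=0$ cannot raise the optimal value and $s\in\ominus$. If instead $z^*_s=1$, the big-$M$ constraints of \eqref{eq:new_min_problem} force $c(x^*,\xi^{(\tilde{s})})\le 0$ for all $\tilde{s}\in\oplus\cup\{s\}$, so $x^*$ is feasible for the $(\oplus\cup\{s\})$-subproblem; this yields $\nu(\oplus\cup\{s\})\le F(x^*)=F^*_\tau\le\hat{F}^*_\tau$, contradicting the hypothesis. Hence only the first branch survives and $s\in\ominus$.

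For the lower bound I would rely on the enumeration identity $F^*_\tau=\min_{S\in\mathbf{T}(\oplus,\ominus)}\nu(S)$ recorded in Remark \ref{remark:consistency_two_sides}, combined with the monotonicity of the map $S\mapsto\nu(S)$: enlarging $S$ only adds constraints to \eqref{eq:min_S_ccp}, so $S_1\subseteq S_2$ implies $\nu(S_1)\le\nu(S_2)$. Since $\oplus$ is a \emph{non-optimal} safe set, $\sum_{s\in\oplus}\pi^{(s)}<1-\tau$, so no $S\in\mathbf{T}(\oplus,\ominus)$ can equal $\oplus$; every such $S$ satisfies $\oplus\subsetneq S\subseteq[N]\backslash\ominus$ and therefore contains at least one index $s(S)\in[N]\backslash(\oplus\cup\ominus)$. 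For that index $\oplus\cup\{s(S)\}\subseteq S$, whence $\nu(\oplus\cup\{s(S)\})\le\nu(S)$ by monotonicity, and a fortiori $\min_{s\in[N]\backslash(\oplus\cup\ominus)}\nu(\oplus\cup\{s\})\le\nu(S)$. Taking the minimum of the right-hand side over $S\in\mathbf{T}(\oplus,\ominus)$ then delivers $\check{F}^*_\tau\le F^*_\tau$.

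The routine ingredients are the monotonicity of $\nu$ and the elementary big-$M$ implication $z_s=1\Rightarrow c(x,\xi^{(s)})\le 0$. The delicate point is the first step of the pruning argument: the whole reasoning hinges on being able to pick, among all optima of \eqref{eq:new_min_problem}, one that simultaneously activates the \emph{entire} safe set $\oplus$, and then apply the dichotomy on $z^*_s$ to that particular optimum rather than to an arbitrary one. This is exactly the content of Definition \ref{def:safe_set}, so I would make that appeal explicit. I would also flag the infeasible case, and the possibly empty selectable set $[N]\backslash(\oplus\cup\ominus)$, at the outset so that the $+\infty$ convention for infeasibility and for an empty minimum makes both claims hold trivially in those degenerate situations.
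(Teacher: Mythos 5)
Your proof is correct and follows essentially the same reasoning the paper gives: the paper offers only the informal justification in the paragraph preceding the proposition (rejecting the hypothesis $z_s=1$ because forcing $\oplus\cup\{s\}$ active already exceeds $\hat{F}^*_\tau$), and your dichotomy on $z^*_s$ for an optimum activating all of $\oplus$, plus the monotonicity of $S\mapsto\nu(S)$ combined with $F^*_\tau=\min_{S\in\mathbf{T}(\oplus,\ominus)}\nu(S)$ for the lower bound, is a faithful formalization of exactly that argument. Your explicit handling of the infeasible and empty-selectable-set degeneracies via the $+\infty$ convention is a welcome addition the paper leaves implicit.
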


\subsection{Valid inequalities}
In some situations, e.g. constraints are decomposable as $c(x,\xi) = \bar{c}(x)-\xi$, it is possible to define so-called \emph{(generalized) precedence constraints} (see \cite{Ruszczynski02} and references therein). That is, one can define a partial order $\preceq$ in the $\xi$-space (e.g. elementwise comparison $\leq$) so that from $\xi^{(s_1)}\preceq \xi^{(s_2)}$, $z_{s_1}\leq z_{s_2}$ becomes a valid inequality for \eqref{eq:new_min_problem}. Here, we propose two kinds of valid inequalities, deduced from Lemma \ref{lemma:consistency_quasi_ccp}. The proofs are omitted due to space constraints.   \\

\noindent Let $S\subseteq[N]$ and $\Xi= \textbf{conv}(\xi^{(\tilde{s})}\, |\, \tilde{s}\in S\})$. We denote by $\mathbf{V}(S)$ indices of scenarios that are vertices for $\Xi$, i.e. $\mathbf{V}(S) := \{s\in S\, |\, \xi^{(s)}\in \textbf{vert}(\Xi)\}$ and we depict by $\mathbf{N}(S)$ indices of all the scenarios included in $\Xi$, i.e. $\mathbf{N}(S) := \{s\in S\, |\, \xi^{(s)}\in \Xi\}$.

\begin{proposition}[Convex-hull inductions]\label{prop:cvx_hull_induction} 
The following inequalities are valid 
\begin{equation}
z_s \geq \sum_{\tilde{s}\in \mathbf{V}(S)} z_{\tilde{s}}-|\mathbf{V}(S)|+1 \quad\forall s \in \mathbf{N}(S) \backslash \mathbf{V}(S).\label{eq:vi_1}
\end{equation}
If scenarios are equally as likely and $\sum_{s\in \mathbf{N}(S)}\,\pi^{(s)} \geq (1-\tau)+N^{-1}$ then
\begin{equation}
\sum_{\tilde{s}\in \mathbf{V}(S)} z_{\tilde{s}} \leq |\mathbf{V}(S)|-1 .\label{eq:vi_2}
\end{equation}
\end{proposition}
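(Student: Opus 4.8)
The plan is to prove both displays in the \emph{value-preserving} sense used for safe and pruned sets: adding each inequality to \eqref{eq:new_min_problem} must leave $F^*_\tau$ unchanged, which I will establish by exhibiting an optimal solution that satisfies it. Both arguments pivot on Lemma~\ref{lemma:consistency_quasi_ccp}, which lets one freely switch on any $z_s$ whose scenario lies in the convex hull of the currently selected scenarios.

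For \eqref{eq:vi_1}, I fix $s \in \mathbf{N}(S)\setminus \mathbf{V}(S)$ and observe that its right-hand side equals $1$ when all vertex indicators are on, i.e. $z_{\tilde{s}}=1$ for every $\tilde{s}\in\mathbf{V}(S)$, and is non-positive otherwise. In the non-positive case the inequality holds trivially since $z_s\geq 0$. In the remaining case the vertices of $\Xi$ all belong to $\hat{S}(z)$, so $\Xi = \textbf{conv}(\xi^{(\tilde{s})}\,|\,\tilde{s}\in\mathbf{V}(S)) \subseteq \textbf{conv}(\xi^{(\tilde{s})}\,|\,\tilde{s}\in\hat{S}(z))$; since $\xi^{(s)}\in\Xi$, Lemma~\ref{lemma:consistency_quasi_ccp} shows that any optimal $(x^*,z^*)$ violating \eqref{eq:vi_1} (necessarily with $z^*_s=0$ and all vertices on) can be replaced by $(x^*,z^*+\mathbf{e}_s)$, which is feasible with the same objective, hence optimal and now compliant. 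Switching on all offending $s$ at once only enlarges the selection, which preserves $\sum_s \pi^{(s)}z_s\geq 1-\tau$ and keeps each constraint $c(x^*,\xi^{(s)})\leq M^{(s)}(1-z_s)$ satisfied (its right-hand side becomes $0$ while $c(x^*,\xi^{(s)})\leq 0$ by Lemma~\ref{lemma:consistency_quasi_ccp}). Thus an optimal solution meeting every inequality \eqref{eq:vi_1} exists, and this step uses only Assumption~(B), not equiprobability.

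For \eqref{eq:vi_2}, I would argue by contradiction on a \emph{minimal-volume} optimal solution $(x^*,z^*)$, whose existence follows from the reduction discussed around \eqref{eq:consistency_ind}. Suppose $\sum_{\tilde{s}\in\mathbf{V}(S)} z^*_{\tilde{s}} = |\mathbf{V}(S)|$, i.e. all vertices are selected. As above, $\Xi \subseteq \textbf{conv}(\xi^{(\tilde{s})}\,|\,\tilde{s}\in\hat{S}(z^*))$, so every $s\in\mathbf{N}(S)$ satisfies $\xi^{(s)}\in\textbf{conv}(\xi^{(\tilde{s})}\,|\,\tilde{s}\in\hat{S}(z^*))$; the characterization \eqref{eq:two_sides_cvx_incl}, which holds precisely in the equiprobable minimal-volume regime, then forces $s\in\hat{S}(z^*)$, i.e. $\mathbf{N}(S)\subseteq\hat{S}(z^*)$. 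With $\pi^{(s)}=1/N$, the hypothesis $\sum_{s\in\mathbf{N}(S)}\pi^{(s)}\geq(1-\tau)+N^{-1}$ reads $|\mathbf{N}(S)|\geq N(1-\tau)+1$, whereas Example~\ref{example:equiprobable} fixes the cardinality of the minimal subset $\hat{S}(z^*)$ at $\lceil N(1-\tau)\rceil$. Since $\lceil N(1-\tau)\rceil < N(1-\tau)+1$, this gives $|\hat{S}(z^*)| \geq |\mathbf{N}(S)| \geq N(1-\tau)+1 > \lceil N(1-\tau)\rceil = |\hat{S}(z^*)|$, a contradiction; hence at least one vertex is off and \eqref{eq:vi_2} holds at $(x^*,z^*)$.

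The main obstacle I anticipate is conceptual rather than computational: pinning down the sense in which these are \emph{valid}. They are not satisfied by every feasible point, since a non-minimal-volume feasible solution may select all vertices yet omit an interior scenario, so the proof must confine attention to minimal and minimal-volume optima and show that augmenting the model with the cuts leaves $F^*_\tau$ untouched. Getting the quantifiers right---``there exists an optimal solution satisfying all cuts simultaneously'' for \eqref{eq:vi_1} and ``every minimal-volume optimum already satisfies the cut'' for \eqref{eq:vi_2}---while invoking equiprobability only where \eqref{eq:two_sides_cvx_incl} is used, is where the care lies.
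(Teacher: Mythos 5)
The paper omits its own proof of this proposition, noting only that both inequalities are ``deduced from Lemma~\ref{lemma:consistency_quasi_ccp}''; your argument is correct and follows exactly that route, including the right reading of ``valid'' as value-preserving (there exists an optimal solution satisfying the cuts) rather than satisfied by every feasible point. Your case analysis for \eqref{eq:vi_1} is sound: the cut is only binding when all of $\mathbf{V}(S)$ is on, in which case $\Xi=\textbf{conv}(\{\xi^{(\tilde{s})}\,|\,\tilde{s}\in\mathbf{V}(S)\})$ is contained in the hull of the selected scenarios and Lemma~\ref{lemma:consistency_quasi_ccp} lets you switch on every offending $s$ at once. For \eqref{eq:vi_2} your contradiction via minimal-volume optima works, but it leans on the characterization \eqref{eq:two_sides_cvx_incl}, which the paper itself asserts without proof; a more self-contained route is available: if all of $\mathbf{V}(S)$ is selected at an optimal $(x^*,z^*)$, quasi-convexity gives $c(x^*,\xi^{(s)})\leq 0$ for every $s\in\mathbf{N}(S)$, and the hypothesis $|\mathbf{N}(S)|\geq N(1-\tau)+1$ means the selection $\mathbf{N}(S)\setminus\{v\}$, for any vertex $v$, still carries probability mass at least $1-\tau$, so $(x^*,z')$ with $\hat{S}(z')=\mathbf{N}(S)\setminus\{v\}$ is feasible, optimal, and satisfies the cut directly. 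Either way the conclusion stands; the only caveat worth recording is that \eqref{eq:vi_1} and \eqref{eq:vi_2} are each shown value-preserving for a fixed $S$, and simultaneous validity across several sets $S$ (where switching scenarios on for one $S$ could complete the vertex set of another) is not addressed by either your argument or the paper's statement.
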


\newpage 
\section{Numerical Experiments}
\label{sec:practice}
We focus here on the problem of Example \ref{example:pfl} (and its straightforward generalization for $p=3$) to showcase the benefits of the following presolve routine. \\
\noindent We initialize $(\oplus, \ominus) =(\emptyset, \emptyset)$ and $(\check{F}^*_\tau,\hat{F}^*_\tau) = (\infty,\infty)$.
\vspace{-1pt}
\begin{enumerate}
\item For every $s\in[N]$, we solve problem $\nu(\{s\})$, record its minimizer $x^*(\{s\})$, update $\check{F}^*_\tau \gets \min\{\check{F}^*_\tau,\nu(\{s\})\}$ and,  if $x^*(\{s\}) \in \text{dom}\,\mathcal{F}_\tau$, $\hat{F}^*_\tau \gets \min\{\hat{F}^*_\tau,\nu(\{s\})\}$.\\
At the end, $\check{F}^*_\tau\leq F^*_\tau\leq \hat{F}^*_\tau$. If $\hat{F}^*_\tau<\infty$, we set $\ominus \gets \{s\in [N]\,|\,\nu(\{s\})>\hat{F}^*_\tau\}$.
\item For every $s \in [N]\backslash (\oplus \cup \ominus)$, condition of Proposition \ref{prop:ns_sel} is checked to possibly add $s$ in $\oplus$. It is advised to set a time limit after which one stops the checks. Indeed, our implementation of the \emph{non-seperability induction} check is of complexity $\mathcal{O}(N^3)$ for $p=3$ compared to $\mathcal{O}(N\log N)$ for $p=2$. Thus, we chose $60$[s] (never reached in practice) for $p=2$ and $120$[s] for $p=3$.
\item We apply Corollary \ref{coro:implied} (possibly extending $\oplus$) and we implement \emph{sub-optimality exclusion} checks for every $s \in [N]\backslash (\oplus \cup \ominus)$ (possibly extending $\ominus$).
\item We finish by tightening big-$M$ bounds as in \eqref{eq:refined_bigMs_safe}.
\vspace{-7pt}
\end{enumerate}
\subsubsection*{Benchmarks} All the details about our implementations as well as the data generation are freely available on
\href{https://github.com/guiguiom/CCP_}{GitHub}. The parameters in \eqref{eq:pfl_ccp} were taken as $p \in \{2, 3\}$, $o=2$,  $\tilde{o}=1$ and $R = \frac{23}{25}\cdot p \cdot \max_{s\, \in\, [N]}\, \|\xi^{(s)}\|_\infty$. For each level $\tau \in \{0.05, 0.15\}$, $5$ independent random datasets $\mathbf{D}$ were drawn and fed the model. An overall time limit was set to $500$[s] ($p=2$) and $720$[s] ($p=3$) to solve the problem. We report below the results: \textbf{UB} (respectively \textbf{LB}) stands as the best upper-bound (respectively lower-bound) found during a solve.
\vspace{-10pt}
\begin{table}
\begin{tabular}{l||c|c|c|c|c|c}
\emph{method} & p & $\tau$ & avg. time [s] | $\textbf{UB}=\textbf{LB}$ & \# insts. solved & avg. $(\textbf{UB}-F^*_\tau)/F^*_\tau$\\
 \hline
 \hline
\texttt{presolve}  & \multirow{2}{*}{$2$}& \multirow{2}{*}{$5\%$} & \textbf{211.75} & \textcolor{green}{5/5} & $0\%$ \\
\texttt{direct} & &  &321.07 & \textcolor{purple}{3/5} & $1.15\%$\\ 
\hline 
\texttt{presolve}  & \multirow{2}{*}{$2$}  & \multirow{2}{*}{$15\%$}  & \textbf{157.41} & \textcolor{green}{5/5} & $0\%$\\
\texttt{direct} & &  &(time limit: 500)& \textcolor{red}{0/5} & $110.3\%$ (excl. $1$ outlier)\\ 
\hline
\texttt{presolve}  & \multirow{2}{*}{$3$}& \multirow{2}{*}{$5\%$} & \textbf{438.91} & \textcolor{blue}{4/5} & $\leq 0.8\%$ \\
\texttt{direct} & &  &(time limit: 720) & \textcolor{red}{0/5} & $14.31\%$\\ 
\hline 
\texttt{presolve}  & \multirow{2}{*}{$3$}  & \multirow{2}{*}{$15\%$}  & \textbf{363.24} & \textcolor{blue}{3/5} & $\leq 7.49\%$ \\
\texttt{direct} & &  &(time limit: 720)& \textcolor{red}{0/5} &$122.8\%$ \\ 
\hline
\end{tabular}
\caption{Comparison between use of presolve and direct resolution}
\end{table}
\vspace{-30pt}

\subsubsection{Comments} The fourth column represents the mean time taken by an algorithm, either \texttt{presolve} or \texttt{direct}, when it did success in providing the optimal solution (i.e. $\textbf{UB}=\textbf{LB}=F^*_\tau$). The last column represents the average \emph{a posteriori} relative optimality gap achieved. Note that if the optimal value $F^*_\tau$ of a given instance was not found within time limit for both \emph{methods}, we launched again \texttt{presolve} for a long period of time until we obtain it. As clearly shown in the results reported above, our preliminary numerical experiments demonstrate that the use of our presolve techniques is computationally beneficial.   
 We leave as future research the possibility to incorporate our valid inequalities \eqref{eq:vi_1} \& \eqref{eq:vi_2} in \eqref{eq:new_min_problem} and to extend the applicability of our methodology to higher dimensions $p>3$.
\bibliographystyle{splncs04}
\bibliography{samplepaper.bib}
\end{document}